\newtheorem{theorem}{Theorem}
\newtheorem{problem}{Problem}
\newcommand{\X}{{\mathcal X}}
\newcommand{\cS}{{\mathcal S}}
\newcommand{\bP}{{\mathbf P}}
\newcommand{\bR}{{\mathbf R}}
\newcommand{\bX}{{\mathbf X}}
\definecolor{jade}{rgb}{0.0, 0.66, 0.42}
\def\BibTeX{{\rm B\kern-.05em{\sc i\kern-.025em b}\kern-.08em
    T\kern-.1667em\lower.7ex\hbox{E}\kern-.125emX}}
\begin{document}
\title{
An excursion onto Schr\"odinger's bridges:
Stochastic flows with spatio-temporal marginals}

\author{Asmaa Eldesoukey, Olga Movilla Miangolarra, and Tryphon T. Georgiou
\thanks{This research has been supported in part by the AFOSR under FA9550-23-1-0096, and ARO under W911NF-22-1-0292.}
\thanks{The authors are with the Department of Mechanical and Aerospace Engineering, University of California, Irvine, CA 92697, USA; Emails: \{aeldesou,omovilla,tryphon\}@uci.edu.}%
}

\maketitle

\begin{abstract}

In a gedanken experiment, in 1931/32, Erwin Schr\"odinger sought to understand how unlikely events can be reconciled with prior laws dictated by the underlying physics. In the process, he posed and solved a celebrated problem that is now named after him -- the {\em Schr\"odinger's bridge problem} (SBP). In this, one seeks to find the ``most likely'' paths, out of all incompatible paths with the prior, that stochastic particles took while transitioning. 
The SBP proved to have yet another interpretation, that of the stochastic optimal control problem to steer diffusive particles so as to match specified marginals --  soft probabilistic constraints. Interestingly, the SBP is convex and can be solved by an efficient iterative algorithm known as the {\em Fortet-Sinkhorn algorithm}. The dual interpretation of the SBP, as an estimation and a control problem, as well as its computational tractability, are at the heart of an ever-expanding range of applications. 
The purpose of the present work is to expand substantially the type of control and estimation problems that can be addressed following Schr\"odinger's dictum, by incorporating termination (killing) of stochastic flows. Specifically, in the context of estimation, we seek the most likely evolution realizing measured spatio-temporal marginals of killed particles. In the context of control, we seek a suitable control action directing the killed process toward spatio-temporal probabilistic constraints. 
To this end, we derive a new {\em Schr\"odinger system} of coupled, in space and time, partial differential equations to construct the solution of the proposed problem. Further, we show that a Fortet-Sinkhorn type of algorithm is, once again, available to attain the associated bridge. A key feature of our framework is that the obtained bridge retains the {\em Markovian} structure in the prior process, and thereby, the corresponding controller takes the form of {\em state feedback}.

\end{abstract}

\begin{IEEEkeywords}
Stochastic systems, Markov processes, Maximum likelihood estimation.
\end{IEEEkeywords}

\section{Introduction}
\label{sec:introduction}

\IEEEPARstart{I}{n two} influential treatises \cite{Sch31,Sch32}, E. Schr\"odinger detailed his thoughts on the time symmetry of the laws of nature (``\"Uber die Umkehrung der Naturgesetze'').  Schr\"odinger's ultimate goal was to understand his namesake equation and the weirdness of the quantum world in classical terms. In the referred works, he began with a gedanken experiment to understand how measured atypical events can be reconciled with a prior probability law.  Amid his perusal, Schr\"odinger single-handedly posed and solved the currently known as the {\em Schr\"odinger's bridge problem} (SBP). 
The problem aims to determine the most likely trajectories of particles, as these transition between states, that are inconsistent with an underlying prior law. Formally, one seeks a posterior distribution on paths that interpolates, i.e. ``bridges'', endpoint marginals obtained empirically. 

In the same work, Schr\"odinger quantified the likelihood of rare events with the {\em relative entropy} from the given prior distribution to the empirical, thereby anticipating the development of the {\em large deviations theory} \cite{dembo2009large}. Furthermore, he discovered a number of now-familiar control concepts, including a system of a Fokker-Planck equation and its adjoint, nonlinearly coupled at the endpoints, that is nowadays referred to as a {\em Schr\"odinger system}.
The optimal solution to the SBP, Schr\"odinger concluded, can be obtained by alternatingly solving the two partial differential equations till convergence. This approach was rigorously proven a few years later by R. Fortet \cite{essid2019traversing}, and the iterative procedure is now known as the {\em Fortet-Sinkhorn algorithm} \cite{chen2021stochastic}.

In hindsight, it seems astonishing that the control theoretic nature of the SBP was not recognized until sixty years later, when, in a masterful work \cite{dai1991stochastic},  P. Dai Pra utilized Flemming's logarithmic transformation to link the SBP with the minimal-energy control problem of steering a stochastic system between two endpoint marginals. Fast forward another twenty years with the thesis of  Y. Chen \cite{chen2016modeling} and several related works \cite{chen2015optimal,chen2015optimalII,chen2018optimal,bakolas2018finite,caluya2021wasserstein,tsiotras}, and the SBP began taking up its rightful place within the stochastic control literature. Indeed, the relation between the SBP and the rapidly developing Monge-Kantorovich transportation theory must not be overlooked.  We refer to \cite{chen2021controlling,chen2021stochastic,chen2021optimal, leonard2014survey} for an overview of current developments. Due to the aforementioned dual interpretation and computational tractability, the SBP offers a wide range of applications in control and neighboring fields,  as in estimation  \cite{eldesoukey2023schr}, physics  \cite{miangolarra2024inferring}, and machine learning \cite{de2021diffusion}.

The present work aims to shed light on a new type of control and estimation problems that can be addressed following Schr\"odinger's rationale. The problem at hand considers stochastic particles that diffuse over a bounded time window while some particles can randomly vanish. In this case, the particles are said to evolve according to a {\em killed process}, see \cite{oksendal2013stochastic}.  Within this setting, the marginals of interest are the particles' initial probability density in space and the spatio-temporal density of the vanishing (killed) particles. 
Our control problem seeks to determine the optimal, in the sense of Schr\"odinger, control action of the stochastic system (as an added drift term) and the required killing rate to satisfy both the initial and spatio-temporal marginal constraints. 
The dual estimation problem likewise seeks the most likely evolution to bridge the same initial and spatio-temporal marginals.

The practical motivation for this work is to provide a framework to tackle control problems under soft spatio-temporal conditioning. 
The spatio-temporal density of killed particles constitutes our data and can represent the mass loss in absorbing media, sediment deposition, termination of stochastic agents upon completion of a task, or ruin in risk processes. 
We aim to {\em infer} the probabilistic model explaining the data and, on the flip side, determine the required added drift (feedback controller) and killing rate to fulfill given spatio-temporal marginal requirements. In this work, we detail a new Schr\"odinger system of coupled partial differential equations allowing for constructing solutions to such problems. Moreover, we present a Fortet-Sinkhorn type of algorithm to attain solutions numerically.

 The paper is structured as follows: In Section \ref{sec:problem}, we lay out the newly proposed problem and its mathematical formulation as a large deviation problem. Section \ref{sec:LD} presents the rationale for establishing the solution to the large deviation problem by deriving a new suitable Schr\"odinger system.  Section~\ref{sec:fluid} discusses the numerical computation of the solution, via a Fortet-Sinkhorn-type iteration that reflects the space-time structure of the problem. Before concluding, a numerical example is provided in Section~\ref{sec:examples}.

\section{Problem Description and Formalism}\label{sec:problem}

The starting point of our formulation can be traced to \cite{chen2022most} and \cite{eldesoukey2023schr}. The first paper introduces a variation of the SBP where losses along trajectories of diffusive particles result in unbalanced masses of the two endpoint marginals. The second paper develops a model for random losses on a discrete-time Markov chain where the marginals are available on stopping times at specific sites. The present work builds on this idea -- to develop a model for controlled stochastic flow susceptible to random killing over a bounded time window with the aim to regulate the spatio-temporal profile of killed particles.
 In this case, the probability measure on paths weighs in on trajectories of possibly different lengths.

For notational and analysis purposes, it is more convenient to extend trajectories beyond killing by freezing the value of the killing state till the end of the time window. 
This can be done in a way that killed particles experience a discontinuity in dynamics. To this end, we consider the particles {\em evolve} in a {\em primary} state space, and if killed, the particles get absorbed to a {\em coffin} state space where dynamics is of vanishing drift and stochastic excitation.
 Accordingly, we define a ``fused'' state space $\mathbb X$ that comprises a primary Euclidean space  $\X = \mathbb R^n$ and a ``coffin''  replica  $\widetilde \X$ where killed particles reside. That is,
\begin{align*}
    \mathbb X := \X \cup \widetilde \X.
\end{align*}%
We also consider time, denoted as $t$, in the interval $[0,1]$ to evaluate probabilities of continuation in $\X$ or absorption in $\widetilde \X$.

The particles evolve in $\X$ according to the It\^o diffusion
\begin{align}\label{eq:diffusion}
    dX_t = b(t,X_t) dt + \sigma(t,X_t) dW_t,
    \end{align}
    where $W_t$ is the standard Wiener process and $b$ and $\sigma$ are uniformly Lipschitz continuous on the time interval $[0,1]$, see \cite{van2007stochastic}. This process is subject to a killing rate $V>0$, which we consider a continuous function in both time and space.  Hence, if $\bX_t$ denotes the extended It\^o process in $\mathbb X$, then 
    \begin{align*}
        d \bX_t = \begin{cases}
            d X_t,  & \text{when }\bX_t \in \X, \\ 0, & \text{when } \bX_t \in \widetilde \X,
        \end{cases}
    \end{align*}
where the transition between the dynamics in $\X$ to that in $\widetilde \X$ takes place due to the killing rate $V$, see Fig.~\ref{fig:pictorial} for a pictorial representation.  As illustrated in the figure, the killed particles' trajectories in the fused space appear as stopped trajectories after the killing instant. 
\begin{figure}[t]
    \centering
    \adjustbox{trim=1.5cm 0cm 0cm 0.5cm}{
    \includegraphics[width= 0.7\columnwidth]{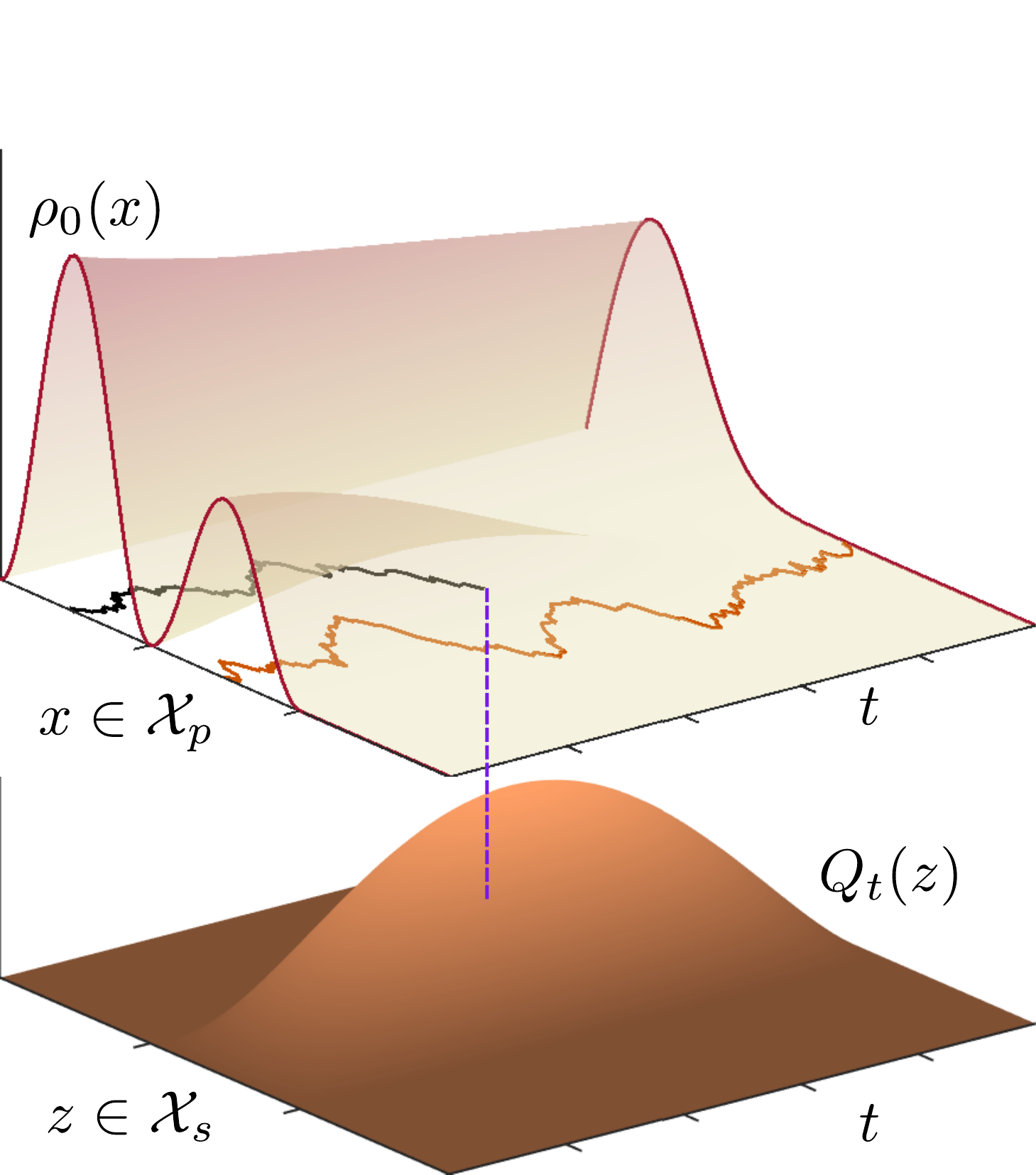}}
    \caption{Illustration of diffusion with losses: The top sub-figure shows two sample paths of particles initially in the primary space $\X$ (a path for a surviving particle and another for a killed one) with a drawing of the initial marginal $\rho_0$ and also of the final marginal of surviving particles. The bottom sub-figure shows the second segment of the killed path and the spatio-temporal marginal $Q_t$ of killed particles in the coffin space $\widetilde \X$.}
    \label{fig:pictorial}
\end{figure}

The space of sample paths will be denoted by $\Omega$,  namely $\Omega := \mathbf D([0,1], \mathbb X)$ where $\mathbf D$ is the {\em Skorokhod space} over $\mathbb X$. The paths can be metrized by the {\em Skorokhod metric} \cite{rogers2000diffusions,pollard1984skorohod}, where curves are compared by how much they need to be perturbed in both the domain (i.e. the time axis) and range, to match. We define $\mathcal P(\Omega)$ as the set of probability measures on $\Omega$ and take $\bR \in \mathcal P (\Omega)$ that corresponds to $\bX_t$ as the prior law.  Let $\bR_t$ denote the one-time marginals of $\bR$, i.e. $\bR_t$ defines ${\rm Law} (\bX_t)$, and $R_t$ {\em the restriction} of $\bR_t$ to $\X$. Then, from \eqref{eq:diffusion}, $R_t$  weakly satisfies
  \begin{align}\label{eq:FP}
    \partial_t R_t = -  \nabla \cdot (b R_t) + \frac{1}{2}  \sum_{i,j=1}^n  \frac{\partial^2 (a_{i,j} R_t)}{\partial x_i \partial x_j} -V R_t,
\end{align}
for the typical assumptions: $a := \sigma \sigma'$ is uniformly positive definite\footnote{$\sigma'$ denotes the transpose of $\sigma$.} and  $a, b, V$ are bounded and $\mathcal C^{1,2}$, see \cite{evans2022partial}. Equation \eqref{eq:FP} is the Kolmogorov forward (Fokker-Planck) equation with a killing rate $V$. Equivalently, we say
\begin{align}\label{eq:r}
    R_t(\cdot) = \int r(0,x,t,\cdot) R_0(x) dx,
\end{align}
where $r(0,x,t,y)$ denotes the Markov transition kernel from $x \in \X$ at time $0$ to $y \in \X$ at time $t$. Here, we also refer to \cite{leonard2022feynman} for an interesting account of the original SBP, diffusion with losses as in \eqref{eq:FP}, and the Feynman-Kac formula.

In this letter, we consider the control problem to specify the optimal update for both the drift and the killing rate of $X_t$, so as to ensure suitable absorption-related marginal at different times. In turn, the data for our problem is the spatio-temporal marginal of the accumulated particles in the coffin space, denoted by $Q_t$, together with the initial probability density $\rho_0$. 
 Assuming the prior law $\bR$ is inconsistent with the data, from the estimation perspective, we seek to update the prior law with an optimal posterior law in $\mathcal P(\Omega)$ that reconciles with the data. Any candidate (i.e. consistent) posterior law is denoted herein as $\bP$ with $\bP_t$ and $P_t$ being its one-time marginals and their restriction to $\X$, respectively. The optimal posterior, in the sense of Schr\"odinger, is the measure in $\mathcal P(\Omega)$ closest to $\bR$ apropos to a relative entropy functional {\em and} matches the data. For that, we propose the following problem.

\begin{problem} \label{prob:CTCSBrid}
Given a prior Markov probability measure $\bR \in \mathcal P(\Omega)$ with one-time marginals restricted to $\X$ evolving according to \eqref{eq:FP}, a probability density $\rho_0$ with support in $\X$ and a spatio-temporal marginal $Q_t$ with support in $\widetilde \X \times [0,1]$, determine 
\begin{subequations}\label{eq:problem1}
\begin{align} \label{eq:likelihoodfn}
     \bP^\star:= \arg \min_{\bP \in \mathcal P (\Omega), \bP \ll \bR} & \mathbb D (\bP \parallel \bR)  := \int_{\Omega} d \bP \log \frac{d \bP }{d \bR}, \\
      \mbox{ subject to } \bP_0(x)  = &\  \rho_0(x),  \ \text{for } x \in \X, \label{eq:massconst} 
\\
      \label{eq:Qtconst}
      \mbox{and }   \bP_t(z)  = &\ Q_t(z),  \ \text{for } z \in \widetilde \X, \, t\in[0,1].
   \end{align}
\end{subequations}
\end{problem}
The objective functional in \eqref{eq:likelihoodfn} is the {\em relative entropy divergence} from $\bR$ to $\bP$. It is strictly convex and bounded when $\bP$ is absolutely continuous\footnote{Throughout, as usual, $0 \log \frac{0}{0} := 0$.} with respect to $\bR$, a relation denoted by $\bR \gg \bP$ \cite{kullback1951information,cover1999elements}. The specification in \eqref{eq:massconst} amounts to the simplifying assumption that no particles are killed initially since $\int_{\X} \rho_0(x) dx =1$; thus, for consistency, we take $Q_{t=0}(\cdot) = 0$. 
Throughout, we let $\alpha(t,x) V(t,x)$ be the killing rate corresponding to $\bP^\star$, with $\alpha \geq 0$ the optimal rescaling over the prior killing rate $V$.
This rescaling must be such that the density of absorbed particles $\alpha(t,x)  V(t,x)  P_t(x)$ coincides with $Q(t,z)$, for $z=x$.

{A practical motivation to Problem \ref{prob:CTCSBrid} is drawn by considering $\bX_t$ as modeling the stochastic flow of particles that drift along with the chance of being deposited at some location $z$ at time $t$. A deposited ``mass landscape'' $Q_t(z)$ that does not match what was expected is observed. Thus, we seek to infer dynamics together with an updated deposition rate so as to restore consistency with the recorded data.
A control theoretic significance for Problem \ref{prob:CTCSBrid} stems from applications involving agents that obey stochastic dynamics with a rate of termination (e.g. due to the completion of a task); the updated drift represents control input whereas the killing rate $\alpha V$ represents the updated rate of termination.

\section{Establishing the Solution} \label{sec:LD}
In this section, we derive the Schr\"odinger system corresponding to Problem \ref{prob:CTCSBrid}.
We start by noting that the space of sample path $\Omega$ decomposes such that
\begin{align*}
     \Omega = \cS \cup \widetilde \cS,
\end{align*}
where the symbol $\cS$ denotes the space of sample paths of {\rm surviving} particles over the interval $[0,1]$, and $\widetilde \cS$ its complement.

 We first define
\begin{align*}
    \bP_{0,x}^{1,y}(\cdot) &:= \bP(.|\bX_0 =x,\bX_1 =y),\\
    \widetilde \bP_{0,x}^{t,z}(\cdot) &:= \bP(.| \bX_0 =x, \bX_T =z, T=t),
\end{align*}
where $T$ denotes the random variable for killing time and $x,y \in \X, z \in \widetilde \X, t \in [0,1]$. The conditioning $\bP_{0,x}^{1,y}$ ($\widetilde \bP_{0,x}^{t,z}$) describes the disintegration of $\bP$ with respect to the initial position and final position in $\X$ (initial position in $\X$, vanishing position in $\widetilde \X$ and the vanishing instant). Then, we say
\begin{align} 
\begin{split}\label{eq:fulllaw}
      &\bP(\cdot) = \begin{cases}\displaystyle
       \quad \int_{\X^2} \bP_{0,x}^{1,y}(\cdot) \pi_{xy}(x,y) dxdy,  \ &\text{on } \cS, \\[8 pt]\displaystyle
 \ \int_{\X \times \widetilde \X \times [0,1] } \! \! \!
    \widetilde \bP_{0,x}^{t,z}(\cdot) \pi_{xzt}(x,z,t)  dxdzdt, \ &\text{on } \widetilde \cS,
    \end{cases}
\end{split}\\[10pt]
 & \text{where } \ \ \ \ \ \ \  \pi_{xy}(x,y) dx dy := d \bP(\bX_0 =x, \bX_1 =y), \nonumber \\
  & \text{and } \ \ \ \ \  \pi_{xzt}(x,z,t) dx dz dt :=  d \bP(\bX_0 =x, \bX_T =z, T =t). \nonumber 
\end{align}
The symbols $\pi_{xy}$ and $\pi_{xzt}$  denote joint densities on $\X^2$ and $\X \times \widetilde \X \times [0,1]$, respectively, that are commonly referred to as {\em couplings}; for convenience,   we have assumed that these are absolutely continuous with respect to the Lebesgue measure.  We refer to \cite{1978iii} for more details on the disintegration theorem.

In a similar manner, we let the disintegrated counterparts for the prior $\bR$ be $\bR_{0,x}^{1,y}$ and $\widetilde \bR_{0,x}^{t,z}$ with the couplings $\rho_{xy}$ on $\X^2$ and $\rho_{xzt}$ on $\X \times \widetilde \X \times [0,1]$. 
Then, by applying the additive property of relative entropy \cite{leonard2014survey} to \eqref{eq:fulllaw}, we get
\begin{align} 
\! \! \! \! \! & \mathbb D (\bP \parallel \bR) = \int_{\X^2} \pi_{xy}(x,y) \log \frac{ \pi_{xy}(x,y)}{ \rho_{xy}(x,y)} dx dy \  \nonumber \\
    &  + \int_{\X^2} \pi_{xy}(x,y)  \bigg[ \mathbb D(\bP_{0,x}^{1,y} \parallel \bR_{0,x}^{1,y}) \bigg]   dx dy  \nonumber \\
    & + \int_{\X\times \widetilde{\X} \times[0,1]} \pi_{xzt}(x,z,t) \log \frac{ \pi_{xzt}(x,z,t)}{ \rho_{xzt}(x,z,t)}   dx dz dt \  \nonumber \\
    & + \int_{\X\times \widetilde{\X} \times[0,1]} \pi_{xzt}(x,z,t)  \bigg[ \mathbb D(\widetilde \bP_{0,x}^{t,z} \parallel \widetilde \bR_{0,x}^{t,z}) \bigg]   dx dz dt. \nonumber \\
    & = \mathbb D(\pi_{xy} \parallel \rho_{xy}) + \mathbb E_{\pi_{xy}} \bigg[ \mathbb D(\bP_{0,x}^{1,y} \parallel \bR_{0,x}^{1,y}) \bigg] \nonumber  \\ 
    &+  \mathbb D(\pi_{xzt} \parallel \rho_{xzt}) + \mathbb E_{\pi_{xzt}} \bigg[ \mathbb D(\widetilde \bP_{0,x}^{t,z} \parallel \widetilde \bR_{0,x}^{t,z}) \bigg]. 
    \label{eq:disint}
\end{align}
We note that the second and fourth terms on the right-hand side of equation \eqref{eq:disint} are minimal and equal to zero if and only if \[\bP_{0,x}^{1,y}=\bR_{0,x}^{1,y}, \  \widetilde \bP_{0,x}^{t,z}=\widetilde \bR_{0,x}^{t,z}.\]
That is when the prior and posterior have identical ``pinned'' bridges,  Problem \ref{prob:CTCSBrid} can be parameterized in terms of the couplings only.
Therefore, 
\begin{align*}
    \mathbb D (\bP^\star \parallel \bR) =  \mathbb D(\pi^\star_{xy} \parallel \rho_{xy}) + \mathbb D(\pi^\star_{xzt} \parallel \rho_{xzt}),
\end{align*}
 where the couplings $\pi^\star_{xy},\pi^\star_{xzt}$ can be obtained as the solutions to the following problem.
 \begin{problem} \label{prob:JP1}
    Minimize
    \begin{subequations}\label{eq:constraints}
        \begin{align}
        & \mathbb D(\pi_{xy} \parallel \rho_{xy}) + \mathbb D(\pi_{xzt} \parallel \rho_{xzt}), \nonumber \text{ subject to } \\
       &    \int_{\X} \pi_{xy}(x,y) dy + \int_0^1 \! \! \! \int_{\widetilde \X} \pi_{xzt}(x,z,t) dz  dt = \rho_0(x), \label{eq:space-const}  \\
        \label{eq:spacetime-const}
      &  \mbox{and }  \int_{\X} \pi_{xzt}(x,z,t) dx = Q_t(z),
    \end{align}
    \end{subequations}
    with $\pi_{xy} \ll \rho_{xy} , \pi_{xzt} \ll \rho_{xzt}$.
    \end{problem}
    
    The augmented Lagrangian of Problem \ref{prob:JP1} is
 \begin{align*}
     &\mathcal L =  \mathbb D(\pi_{xy} \parallel \rho_{xy}) + \mathbb D(\pi_{xzt} \parallel \rho_{xzt}) +\\
     & \! \int_{\X} \! \! \! \mu(x) \! \bigg[  \int_{\X} \pi_{xy}(x,y) dy 
      + \! \int_0^1 \! \! \! \int_{\widetilde \X} \pi_{xzt}(x,z,t) dz  dt - \!\rho_0(x)\bigg] \! dx  \\
     &+ \int_0^1 \! \! \! \int_{\widetilde \X} \eta(t,z) \bigg[\int_{\X} \pi_{xzt}(x,z,t) dx -Q_t(z) \bigg] dz dt,
 \end{align*}
 where $\mu, \eta$ are Lagrange multipliers enforcing \eqref{eq:space-const} and \eqref{eq:spacetime-const}, respectively.
 From the first-order optimality conditions
   \begin{align*}
   0 = &1+ \log \frac{\pi^\star_{xy}
   (x,y)}{\rho_{xy}(x,y)} + \mu(x), \mbox{ and} \\
  0=&  1+ \log \frac{\pi^\star_{xzt}(x,z,t)}{\rho_{xzt}(x,z,t)} + \mu(x) + \eta(t,z),
\end{align*}
we deduce that the minimizer of Problem \ref{prob:JP1} satisfies
    \begin{subequations} \label{eq:opt1}
         \begin{align}
        \pi^\star_{xy}(x,y) &=\rho_{xy}(x,y) f(x), \label{eq:opt1a}\\
        \pi^\star_{xzt}(x,z,t) &= \rho_{xzt}(x,z,t) f(x) \Lambda(t,z),\label{eq:opt1b}
    \end{align}
    \end{subequations}%
  for $f(x) = \exp(-1 - \mu(x))$, and $\Lambda(t,z) = \exp(-\eta(t,z))$. Substituting \eqref{eq:opt1} into \eqref{eq:space-const} we obtain that
  \begin{subequations} \label{eq:Sch1}
  \begin{align}
  \varphi(0,x)\hat \varphi(0,x)=\rho_0(x),
      \label{eq:coupl1}
  \end{align}
 where we took
\begin{align}
   &\varphi(0,x) =  \int\limits_{\X}  \frac{\rho_{xy}(x,y)}{R_0(x)} dy \ + \int\limits_0^1 \! \!  \int\limits_{\widetilde \X} \Lambda(t,z) \frac{\rho_{xzt} (x,z,t)}{R_0(x)} dz dt, \nonumber
\end{align}
and $\hat \varphi(0,x) =  R_0(x)f(x)$.
Inspecting closer we see that
\begin{align}\nonumber
 \varphi(0,x) =&   \int_{\X}   r(0,x,1,y) dy \\
    & + \int_0^1   \int_{\X} \Lambda(t,z) V(t,z) r(0,x,t,z) dz dt, \label{eq:BK} \tag{\rm FS1}
    \end{align}
with $r$ the transition kernel of the prior (see \eqref{eq:FP} and \eqref{eq:r}).  Equation \eqref{eq:BK}, and subsequent similarly labeled equations, serve as steps of an {\em iterative numerical algorithm} ({\em Fortet-Sinkhorn}) to determine unknowns $\varphi,\hat\varphi,\Lambda,\hat \Lambda$ as explained below and in the next section or serve in the next proof.

By inserting $\varphi(1,\cdot)=1$ into the first integral on the right hand side of \eqref{eq:BK}, we see that
 $\varphi(0,\cdot)$ can be obtained by solving the Kolmogorov backward equation
\begin{align}
      \partial_t \varphi =- b \cdot \nabla \varphi - \frac{1}{2} \! \! \sum_{i,j=1}^n \! \! a_{i,j} \frac{\partial^2 \varphi}{\partial x_i  \partial x_j} +V \varphi -  V \Lambda,  \label{eq:backdiff}
\end{align}
with terminal condition $\varphi(1,\cdot) = 1$.
Similarly, by substituting \eqref{eq:opt1b} into \eqref{eq:spacetime-const}, we obtain that
\begin{align}
 \Lambda(t,z) \hat \Lambda(t,z) &=Q_t(z), \text{where }  \label{eq:coupl2}\\
\hat \Lambda(t,z) &= \int_{\X} \rho_{xzt} (x,z,t) f(x)  dx \nonumber \\
& =  \int_{\X}  V(t,z) r(0,x,t,z)   \hat \varphi(0,x) dx. \label {eq:FK} \tag{\rm FS2}
\end{align}
This expression motivates defining 
\begin{align} 
    \hat \varphi(t,\cdot) =  \int_{\X} r(0,x,t,\cdot)  \hat \varphi(0,x) dx, \label{eq:hatphik} \tag{FS3}
\end{align}
that satisfies the Kolmogorov forward equation
\begin{align}
    \partial_t \hat \varphi &= -  \nabla \cdot (b \hat \varphi) + \frac{1}{2}  \sum_{i,j=1}^n  \frac{\partial^2 ( a_{i,j} \hat \varphi)}{\partial x_i \partial x_j} -V \hat \varphi,  \label{eq:fordiff}\\
\text{and }   \hat \Lambda(t,z)  &= V(t,z)  \hat \varphi(t,z) . \label{eq:lambdahat}
\end{align}
\end{subequations}
\indent Equations (\ref{eq:coupl1}-\ref{eq:lambdahat}) constitute the Schr\"odinger system associated with Problems \ref{prob:CTCSBrid} and \ref{prob:JP1}. 
In contrast to the classical Schr\"odinger system \cite{chen2016entropic}, 
the partial differential equations \eqref{eq:backdiff} and \eqref{eq:fordiff} are coupled at all times via \eqref{eq:coupl2} and \eqref{eq:lambdahat}. The construction of numerical solutions to this system will be discussed in the next section. We now highlight that the Schr\"odinger system provides the solution to Problem 1.

 \begin{theorem}\label{thm:1}  Assume that Problem \ref{prob:CTCSBrid} is feasible. Then:
\begin{itemize}
   \item[i)] the system (\ref{eq:coupl1}-\ref{eq:lambdahat}) has a unique solution\footnote{This solution is unique modulo a scaling factor, in that,  for any $\kappa>0$,  $(\kappa \varphi, \kappa^{-1}\hat\varphi,\kappa\Lambda,\kappa^{-1}\hat\Lambda)$ is also a solution and there are no others.} $(\varphi, \hat\varphi,\Lambda,\hat\Lambda)$,
    \item[ii)] $\bP^\star$ in Problem \ref{prob:CTCSBrid} is the law of the Markov process 
\begin{align*} 
    dX_t = [b(t,X_t)+ \sigma(t,X_t) u(t,X_t)]dt + \sigma(t,X_t) d W_t,
\end{align*}
with the killing rate $(\Lambda/\varphi) V$ and the term $\sigma u$ representing the control input where $u = \sigma' \nabla \log \varphi$,
\item[iii)]  the one-time marginals for $\bP^\star$ restricted to $\X$ are
\begin{align}\label{eq:Pt}
    P_t(\cdot) = \varphi(t,\cdot) \hat \varphi(t,\cdot) ,
\end{align}
and  satisfy
\begin{align}\label{eq:post1}
    \partial_t P_t = &  - \nabla \cdot((b + \sigma u)P_t) + \frac{1}{2} \! \! \sum_{i,j=1}^n \! \! \frac{\partial^2  (a_{i,j} P_t)}{\partial x_i \partial x_j} - \alpha V P_t,
\end{align}
 with $\alpha = \Lambda /\varphi$.
\end{itemize}
\end{theorem}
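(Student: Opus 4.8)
The plan is to build on the variational reduction already carried out before the statement: the disintegration \eqref{eq:fulllaw} together with the first-order conditions \eqref{eq:opt1} show that, \emph{conditional} on the existence of a solution $(\varphi,\hat\varphi,\Lambda,\hat\Lambda)$ to the Schr\"odinger system, the optimal couplings $\pi^\star_{xy},\pi^\star_{xzt}$ are completely pinned down, and hence so is $\bP^\star$. Thus the three claims reduce to: (i) proving the Schr\"odinger system \eqref{eq:coupl1}--\eqref{eq:lambdahat} is solvable with an essentially unique solution; (ii) reassembling $\bP^\star$ from the optimal couplings and recognizing the resulting law as the stated controlled diffusion with rescaled killing; and (iii) a direct verification that $P_t=\varphi\hat\varphi$ solves \eqref{eq:post1}.

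For (i), I would cast the Fortet--Sinkhorn steps \eqref{eq:BK}, \eqref{eq:FK}, \eqref{eq:hatphik} as a self-map on a cone of positive functions and prove it is a strict contraction in Hilbert's projective metric. Concretely, the map $\Lambda\mapsto\varphi(0,\cdot)$ in \eqref{eq:BK} is an \emph{affine} positive integral operator (a fixed terminal contribution $\int_\X r(0,x,1,y)dy$ plus a positive kernel against $\Lambda$); the pointwise inversions enforcing the couplings, $\hat\varphi(0,\cdot)=\rho_0/\varphi(0,\cdot)$ from \eqref{eq:coupl1} and $\Lambda=Q_t/\hat\Lambda$ from \eqref{eq:coupl2}, are projective isometries; and the forward propagation \eqref{eq:hatphik} followed by $\hat\Lambda=V\hat\varphi$ in \eqref{eq:lambdahat} is again a positive integral operator. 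Feasibility of Problem~\ref{prob:CTCSBrid} is what guarantees the relevant fused kernel has finite projective diameter, so Birkhoff's theorem delivers a contraction factor strictly below one and a unique fixed point modulo the stated scaling. This is the principal obstacle: unlike the classical two-marginal case, the operator mixes a single terminal contribution on $\cS$ with a \emph{continuum} of spatio-temporal contributions on $\widetilde\cS$, so positivity, compactness, and the finiteness of the projective diameter must be established for this fused space-time kernel.

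For (ii), I would use the disintegration \eqref{eq:fulllaw} with the optimal couplings \eqref{eq:opt1}. Since the second and fourth terms in \eqref{eq:disint} vanish at the optimum, the pinned bridges of $\bP^\star$ coincide with those of $\bR$, so $\bP^\star\ll\bR$ with a Radon--Nikodym derivative that factors multiplicatively as $f(\bX_0)$ on surviving paths and $f(\bX_0)\,\Lambda(T,\bX_T)$ on killed paths. This is precisely a space-time Doob $h$-transform of the prior diffusion with killing, taking $h=\varphi$: by \eqref{eq:backdiff}, $\varphi$ obeys the prior's Kolmogorov backward equation up to the source $-V\Lambda$, and the Feynman--Kac reading of \eqref{eq:BK} identifies $\varphi(t,x)$ as the prior conditional expectation of this multiplicative weight given survival to $(t,x)$. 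The logarithmic (Fleming) transformation then converts $\varphi$ into the added drift $\sigma u=\sigma\sigma'\nabla\log\varphi$ with $u=\sigma'\nabla\log\varphi$, while the source term rescales the killing intensity from $V$ to $(\Lambda/\varphi)V$; Markovianity is inherited because an $h$-transform of a Markov process is again Markov.

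For (iii), the forward equation \eqref{eq:fordiff} for $\hat\varphi$ and the backward equation \eqref{eq:backdiff} for $\varphi$ yield \eqref{eq:post1} by a short computation. Differentiating $P_t=\varphi\hat\varphi$ and substituting the two PDEs, the $\pm V\varphi\hat\varphi$ terms cancel, the first-order terms collapse to $-\nabla\cdot(bP_t)$, the source term $-V\Lambda\hat\varphi=-(\Lambda/\varphi)V\,\varphi\hat\varphi$ supplies the killing $-\alpha VP_t$ with $\alpha=\Lambda/\varphi$, and the remaining second-order terms reorganize through the product rule into $\tfrac12\sum_{i,j}\partial^2_{ij}(a_{ij}P_t)-\nabla\cdot(\sigma u\,P_t)$. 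Consistency of the data then closes the argument: $P_0=\varphi(0,\cdot)\hat\varphi(0,\cdot)=\rho_0$ by \eqref{eq:coupl1}, and $\alpha VP_t=\Lambda\hat\Lambda=Q_t$ by \eqref{eq:coupl2} together with \eqref{eq:lambdahat}.
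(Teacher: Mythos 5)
Your proposal reaches all three conclusions correctly, and part (iii) coincides with the paper's own computation (differentiate $P_t=\varphi\hat\varphi$, substitute \eqref{eq:backdiff} and \eqref{eq:fordiff}, watch the $\pm V\varphi\hat\varphi$ terms cancel and the source $-V\Lambda\hat\varphi$ produce $-\alpha VP_t$). Where you diverge is in parts (i) and (ii). For (i), the paper does \emph{not} invoke the Fortet--Sinkhorn contraction at this stage: it argues that strict convexity of the objective plus linearity of the constraints, together with feasibility, already yield the existence and uniqueness of the multipliers $f,\Lambda$ in \eqref{eq:opt1}, and then constructs $(\varphi,\hat\varphi,\hat\Lambda)$ sequentially from \eqref{eq:backdiff}, \eqref{eq:coupl1}, \eqref{eq:fordiff} and \eqref{eq:lambdahat}; the Birkhoff/Hilbert-metric contraction argument you sketch is reserved for the separate convergence theorem about the iteration \eqref{eq:sink1}. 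Your route is viable but strictly harder --- you would have to actually establish finiteness of the projective diameter for the fused space-time kernel (the obstacle you correctly flag), including the affine wrinkle that \eqref{eq:BK} adds a fixed positive term $\int_\X r(0,x,1,y)\,dy$ to the linear part; the duality argument sidesteps all of this. For (ii), the paper assembles $\bP^\star$ as a mixture of the prior's pinned bridges weighted by the optimal couplings, notes Markovianity from the multiplicative structure of \eqref{eq:opt1}, and then \emph{reads off} the drift $\sigma u$ and killing rate $\alpha V$ from the Fokker--Planck equation \eqref{eq:post1} established in (iii); your Doob $h$-transform/Fleming logarithmic-transformation reading of \eqref{eq:BK} arrives at the same controlled SDE more directly and makes the identification $u=\sigma'\nabla\log\varphi$ conceptually transparent, at the cost of having to justify the $h$-transform machinery for a process with killing. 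Your closing consistency check ($P_0=\rho_0$ via \eqref{eq:coupl1} and $\alpha VP_t=\Lambda\hat\Lambda=Q_t$ via \eqref{eq:coupl2} and \eqref{eq:lambdahat}) is a worthwhile addition that the paper omits.
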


\begin{proof}
The functional in Problem \ref{prob:CTCSBrid} is strictly convex and the constraints are linear. Hence, feasibility implies the existence of (unique) suitable functions $f,\Lambda$ that satisfy equations~\eqref{eq:opt1}. Integrating \eqref{eq:backdiff} gives $\varphi$, which in turn gives $\hat\varphi(0,\cdot)$ from \eqref{eq:coupl1} and hence $\hat\varphi$ from \eqref{eq:fordiff} (also, \eqref{eq:hatphik}), and finally, $\hat\Lambda$ from \eqref{eq:lambdahat}. This establishes statement i).

In the discussion leading to the theorem we have seen that\footnote{Information about the type of the paths (killed or not) is already encoded in the pinned bridges since $\bR_{0,x}^{1,y}(A \subseteq \widetilde{\cS}) =0$ and $\widetilde \bR_{0,x}^{t,z}(A \subseteq \cS) =0$.}
\begin{align*}
  \bP^\star(\cdot)  &=  \int_{\X^2}\bR_{0,x}^{1,y}(\cdot) \pi^\star_{xy}(x,y) dxdy\,  \\
  &\ + \!\int_{\X \times \widetilde \X \times [0,1] }\!\!\!\!
    \widetilde \bR_{0,x}^{t,z}(\cdot) \pi^\star_{xzt}(x,z,t)  dxdzdt,
\end{align*}
where $\pi^\star_{xy}$ and $\pi^\star_{xzt}$ are as in \eqref{eq:opt1}.
From \eqref{eq:opt1} we also observe that $\bP^\star$ is Markov since the prior is Markov and the correction in \eqref{eq:opt1} amounts to multiplicative scaling at fixed points in time. We will return and complete the proof of ii), displaying the terms in the equation, after we first establish iii).

Using the form of $\bP^\star$ given above, we show that the one-time marginals restricted to $\X$ are as in~\eqref{eq:post1}. Indeed, 
\begin{align*}
  &  P_\tau(x_\tau) dx_\tau \!= \! \!\int\limits_{\X^2}   \! \! d\bR(\bX_\tau = x_\tau| \bX_0 = x, \bX_1 = y) \pi^\star_{xy}(x,y) dx dy  \\
   & + \int_{\X \times \widetilde \X \times (\tau,1]}  \!  \big\{ d \bR(\bX_\tau = x_\tau | \bX_0 = x, \bX_T = z,T=t) \\ & \hspace{75pt} \times \pi^\star_{xzt}(x,z,t) \big\} dx dz dt ,
\end{align*}
 at any time $\tau$, and for any $x_\tau \in \X$. Thus, the value $P_\tau(x_\tau)$ results from two contributions, first from the paths that survive over the window $[0,1]$ while passing through $x_\tau$, and second, from paths that terminate at any time in $(\tau,1]$. From the Markovianity of the prior, we know
\begin{align*}
    & d\bR(\bX_\tau = x_\tau| \bX_0 = x, \bX_1 = y)  = \\ 
    & \hspace{120pt}  \frac{r(0,x,\tau,x_\tau) r(\tau,x_\tau,1,y)}{r(0,x,1,y)} dx_\tau ,\\
    & d \bR(\bX_\tau = x_\tau | \bX_0 = x, \bX_T = z, T =t)  = \\ 
    & \hspace{120pt} \frac{r(0,x,\tau,x_\tau) r(\tau,x_\tau,t,z)}{r(0,x,t,z)} dx_\tau.
\end{align*}
These expressions, together with \eqref{eq:opt1}, that is,
\begin{align*}
    \pi^\star_{xy}(x,y) &=  r(0,x,1,y) R_0(x) f(x)= r(0,x,1,y) \hat \varphi(0,x), \\
    \pi^\star_{xzt}(x,z,t) &=  V(t,z) r(0,x,t,z) \hat \varphi(0,x)  \Lambda(t,z),  
\end{align*}
lead to
\begin{align*}
    &P_\tau(x_\tau) = \int\displaylimits_{\X^2} r(0,x,\tau,x_\tau) r(\tau,x_\tau,1,y) \hat \varphi(0,x) dx dy  \ + \\
    & \! \! \!  \int\displaylimits_{\X^2 \times (\tau,1]} \! \! \! \Lambda(t,z) V(t,z) r(0,x,\tau,x_\tau) r(\tau,x_\tau,t,z)   \hat \varphi(0,x)   dx dz dt.
\end{align*}
Factoring out $\hat\varphi$ (see \eqref{eq:hatphik}) leads to $P_\tau(\cdot) =  \varphi(\tau,\cdot) \hat \varphi(\tau,\cdot)$,
and from \eqref{eq:backdiff}, we have $\varphi(\tau,x_\tau)$  equals
\begin{align*} 
    \int_{\X}  \! \!   r(\tau,x_\tau,1,y) dy +  \int_\tau^1 \! \!   \int_{\X}  \Lambda(t,z) V(t,z) r(\tau,x_\tau,t,z) dz dt. 
\end{align*}
Having established \eqref{eq:Pt}, we can verify \eqref{eq:post1} by substituting \eqref{eq:backdiff} and \eqref{eq:fordiff} in
$ \partial_t P_t =  \varphi \partial_t \hat \varphi  +\hat \varphi \partial_t \varphi .$ Finally, from the generator in \eqref{eq:post1}, we can read off the added drift (feedback controller) and killing rate for the stochastic differential equation in ii).
\end{proof}

 \section{The Fortet-Sinkhorn Algorithm} \label{sec:fluid}
 We now present an algorithm that provides a numerical solution to the Schr\"odinger system \eqref{eq:Sch1}.  The solution can be obtained from the limit of a Fortet-Sinkhorn-type iteration:
 \begin{align}
\begin{split} \label{eq:sink1}
     \!\!\! \varphi(0,\cdot)   \mathop{\mapsto}_{\eqref{eq:coupl1}} \hat \varphi(0,\cdot) \mathop{\mapsto}_{\eqref{eq:FK}}  \hat \Lambda \mathop{\mapsto}_{\eqref{eq:coupl2}} \Lambda
      \mathop{\mapsto}_{\eqref{eq:BK}} (\varphi(0,\cdot))_{\rm next}. 
\end{split}
\end{align}
This is stated next.

\begin{theorem} 
Under the assumptions in Problem \ref{prob:CTCSBrid} and starting from a positive real-valued function $\varphi(0,\cdot)$, the iteration \eqref{eq:sink1} converges to a unique fixed point, i.e., to a quadruple $(\varphi(0,\cdot),\hat \varphi(0,\cdot), \Lambda,\hat \Lambda)$ leading to the solution of the Schr\"odinger system \eqref{eq:Sch1}.
\end{theorem}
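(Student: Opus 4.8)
The plan is to read one full sweep of the iteration \eqref{eq:sink1} as a single self-map $\mathcal{T}$ of the cone $\mathcal{C}$ of positive functions $\varphi(0,\cdot)$ on $\X$, and to prove that $\mathcal{T}$ is a \emph{strict contraction} on a suitable complete metric subspace of $\mathcal{C}$. The Banach fixed-point theorem would then deliver at once a unique fixed point together with geometric convergence from any positive start $\varphi(0,\cdot)$; that fixed point, pushed through \eqref{eq:coupl1}, \eqref{eq:FK}, \eqref{eq:coupl2}, \eqref{eq:BK}, reconstructs the quadruple $(\varphi,\hat\varphi,\Lambda,\hat\Lambda)$ and hence the unique solution of the Schr\"odinger system guaranteed by Theorem~\ref{thm:1}(i).

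First I would isolate the four elementary maps composing $\mathcal{T}$. The normalizations \eqref{eq:coupl1} and \eqref{eq:coupl2} act pointwise as $\varphi(0,\cdot)\mapsto \rho_0/\varphi(0,\cdot)$ and $\hat\Lambda\mapsto Q/\hat\Lambda$, i.e.\ inversion followed by multiplication by a fixed positive function. The step \eqref{eq:FK} is the positive linear integral operator $(K_1\psi)(t,z)=\int_\X V(t,z)\,r(0,x,t,z)\,\psi(x)\,dx$ carrying functions on $\X$ to functions on $\widetilde{\X}\times[0,1]$, while \eqref{eq:BK} is the \emph{affine} operator $\Lambda\mapsto s+K_2\Lambda$, with survival term $s(x)=\int_\X r(0,x,1,y)\,dy$ and $(K_2\Lambda)(x)=\int_0^1\!\int_\X \Lambda(t,z)V(t,z)r(0,x,t,z)\,dz\,dt$. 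In the classical loss-free SBP one would now invoke Birkhoff's theorem in the Hilbert projective metric: inversion and fixed multiplication are isometries, and positive kernels of finite projective diameter are strict contractions.

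The hard part, and the genuine departure from the classical case, is the survival term $s$ in \eqref{eq:BK}. It makes the last step affine rather than linear and, tellingly, breaks the projective scale invariance $\varphi\mapsto\kappa\varphi$ on which the Hilbert-metric argument rests. My proposed remedy is to work instead in the \emph{Thompson metric} $d_T(u,v)=\lVert \log u-\log v\rVert_\infty$. Inversion and fixed multiplication remain isometries for $d_T$, every positive linear operator (in particular $K_1$) is $d_T$-nonexpansive, and -- crucially -- the affine map is a \emph{strict} $d_T$-contraction: if $e^{-\delta}\Lambda'\le\Lambda\le e^{\delta}\Lambda'$ then $s+K_2\Lambda \le (s+K_2\Lambda')\bigl(1+(e^{\delta}-1)\,\theta\bigr)$ with $\theta=\sup_x \tfrac{(K_2\Lambda')(x)}{s(x)+(K_2\Lambda')(x)}<1$, the strict inequality coming precisely from $s$ being bounded below by a positive constant (the survival probability stays away from zero because $V$ is bounded). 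Composing the four maps, $\mathcal{T}$ contracts $d_T$ by a factor strictly below one.

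The main obstacle is thus purely quantitative: making the contraction modulus \emph{uniform} and the underlying space \emph{complete}. I would use two-sided Gaussian (Aronson) bounds on the transition kernel $r$ -- available since $a$ is uniformly positive definite and $a,b,V$ are bounded and $\mathcal{C}^{1,2}$ -- to show that $\mathcal{T}$ leaves invariant a fixed order interval $\{g:\,c_0\le g\le C_0\}$ on which $d_T$ is complete and $s\ge c_0>0$; the pairwise $d_T$-distances there are bounded by some finite $\Delta_0$, and on this set the composite modulus $\theta$ depends only on $c_0,C_0,\Delta_0$ and stays strictly below one. Closing these estimates requires care with the product space-time domain of $\Lambda,\hat\Lambda$ and with the decay of $\rho_0$ and $Q_t$ at spatial infinity; once they are in place, Banach's theorem completes the proof and the limit is the unique solution of \eqref{eq:Sch1}.
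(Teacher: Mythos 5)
Your route is genuinely different from the paper's. The paper's proof is a short appeal to the classical machinery: it works in the Hilbert projective metric on the cone $\mathcal K^+$, declares the two normalization steps \eqref{eq:coupl1}, \eqref{eq:coupl2} to be isometries and the two kernel steps \eqref{eq:FK}, \eqref{eq:BK} to be strictly contractive linear maps via Birkhoff's theorem, and cites \cite{chen2022most,chen2016entropic,georgiou2015positive} for the details. You instead put your finger on the one feature that makes this iteration \emph{not} literally the classical one: the survival term $s(x)=\int_\X r(0,x,1,y)\,dy$ in \eqref{eq:BK} makes that step affine rather than linear, so it is not homogeneous and the off-the-shelf Birkhoff argument does not apply verbatim (this is also in tension with the scaling footnote of Theorem~\ref{thm:1}, which implicitly requires rescaling the terminal condition $\varphi(1,\cdot)=1$ along with the quadruple). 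Your remedy --- switch to the Thompson metric, where inversion and fixed multiplication remain isometries, positive linear maps are nonexpansive, and the affine map $\Lambda\mapsto s+K_2\Lambda$ contracts strictly \emph{because} $s>0$ --- is sound and, in a sense, more faithful to the structure of \eqref{eq:sink1} than the paper's one-liner. What the paper's approach buys (once the affine step is handled, e.g.\ by lifting $\Lambda\mapsto s+K_2\Lambda$ to the positive linear map $(\Lambda,c)\mapsto(K_2\Lambda+cs,c)$ on an augmented cone and applying Birkhoff there) is a contraction modulus controlled by the projective diameter of the kernel alone; what yours buys is a direct Banach fixed-point argument with geometric convergence in a genuine metric, at the price of the quantitative work you correctly flag.

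Two caveats on your plan, so that you do not mistake them for routine. First, the Thompson--Lipschitz constant of $\Lambda\mapsto s+K_2\Lambda$ is $\sup_{\delta\le\Delta}\delta^{-1}\log\bigl(1+(e^{\delta}-1)\theta\bigr)$, which tends to $1$ as $\Delta\to\infty$; so the strict contraction is only uniform on sets of bounded Thompson diameter, and the invariant order interval is not a technical convenience but the load-bearing step --- you must also show that an \emph{arbitrary} positive start enters that interval after one sweep (it does, since after one application of \eqref{eq:coupl1}--\eqref{eq:BK} all quantities are squeezed between kernel-induced bounds, but this needs to be written out). Second, on $\X=\mathbb R^n$ the interval $\{c_0\le g\le C_0\}$ with constants cannot work for $\hat\varphi(0,\cdot)=\rho_0/\varphi(0,\cdot)$ when $\rho_0$ decays at infinity; the order interval must be weighted, e.g.\ $c_0\rho_0\le\hat\varphi(0,\cdot)\le C_0\rho_0$ and $c_1 Q\le\hat\Lambda\le C_1 Q$, with the Aronson bounds used to show these weighted intervals are preserved and have finite Thompson diameter. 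With those two points closed, your argument is complete and delivers slightly more than the paper states (an explicit geometric rate on the invariant set).
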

\begin{proof}
The claim follows from standard arguments used to prove convergence of the classical Fortet-Sinkhorn iteration --the cycling through \eqref{eq:sink1} is strictly contractive, see \cite{chen2022most,chen2016entropic,georgiou2015positive}. Specifically, the composition of maps in \eqref{eq:sink1} is a strict contraction on the cone, excluding the apex, of positive real-valued functions denoted as $\mathcal K^+$ with respect to the Hilbert-projective metric.
The steps follow exactly those in \cite{chen2022most} and \cite{chen2016entropic}, where the map $\varphi(0,\cdot) \to (\varphi(0,\cdot))_{\rm next}$ decomposes into two isometries on $\mathcal K^+$ and two strictly linear contractive maps on $\mathcal K^+$ by virtue of Birkhoff's theorem. Thus, the iteration converges to a unique fixed point  $(\varphi(0,\cdot), \hat \varphi(0,\cdot), \Lambda, \hat \Lambda)$.
\end{proof}

\section{Example} \label{sec:examples}
\begin{figure}[t]
    \centering
    \includegraphics[width = 0.6\columnwidth]{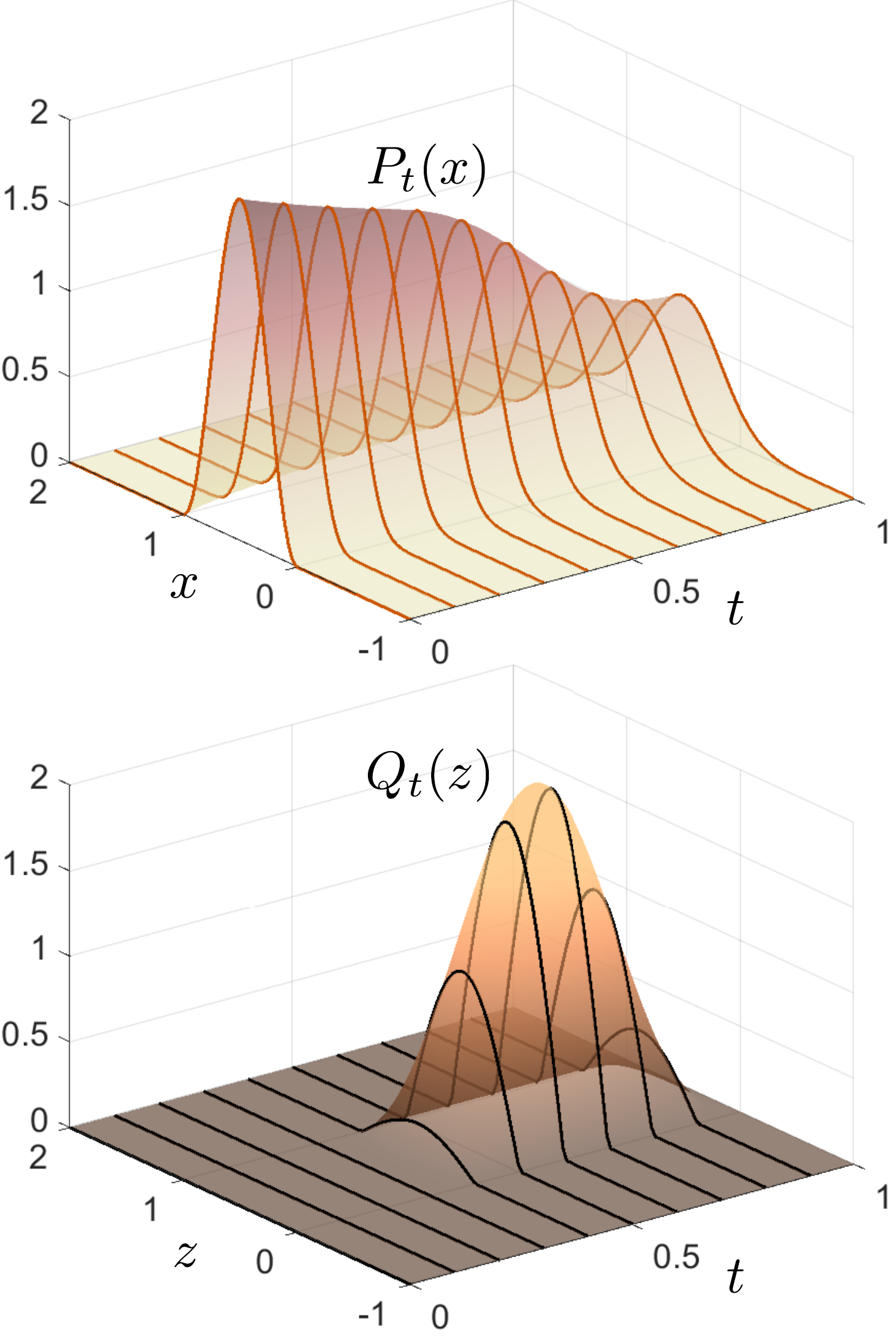}
    \caption{Top: One-time marginals of $\bP^\star$ restricted to the primary space. Bottom: One-time marginals of $Q_t$ (black), the spatio-temporal marginal of the particles in the coffin space as obtained from the Fortet-Sinkhorn iteration \eqref{eq:sink1} (shaded surface).}
    \label{fig: sigma}
\end{figure}
We demonstrate the above algorithm with a numerical example. We take as prior law the one corresponding to the diffusion process $dX_t = \frac{1}{4} d W_t,$
with killing $V(t,x)=0.3$, and $t \in [0,1]$. We seek to match the marginal constraints 
\begin{align*}
\rho_0(x) &= \left(1 -\cos(2 \pi x)\right) \mathds 1_{\{[0,1]\}}(x),\\
    Q_t(z) &= 
          \sin(\pi z)\left(1 - \cos(3 \pi t -  \pi)\right)\mathds 1_{\{[0,1]\}}(z)\mathds 1_{\{[\frac{1}{3},1]\}}(t),
\end{align*}
where $\mathds{1}_{\{A\}}$ denotes the indicator function of the set $A$.
This spatio-temporal marginal accounts for about $42 \%$ of the total initial mass while ensuring delay in the mass absorption till $t=1/3$ 
and is shown in Fig.~\ref{fig: sigma} (bottom sub-figure, with black curves delineating the one-time marginals of $Q_t$).

By direct application of the Fortet-Sinkhorn algorithm~\eqref{eq:sink1} we obtain the posterior law $\bP^\star$ that indeed satisfies the constraints; Fig.~\ref{fig: sigma} (top) shows the initial (agreeing with $\rho_0$) and subsequent one-time marginals of $\bP^\star$ in the primary space with the mass loss taking place over $[\frac{1}{3},1]$ (time), as specified. The shaded surface in Fig.~\ref{fig: sigma} (bottom) is the spatio-temporal marginal obtained from iteration \eqref{eq:sink1} in agreement with $Q_t$.

\section{Conclusions}
This letter focuses on the control problem of regulating stochastic systems so as to match specified spatio-temporal data of associated killed sample paths. 
Killing may model the completion of a task, or mass being absorbed or deposited along trajectories.
The formulation herein is envisioned as a first step towards a variety of new stochastic control problems with data and specifications cast in the form of soft (probabilistic) conditioning. 
To this end, Schr\"odinger's paradigm, anchored in the large deviations rationale, proved versatile and computationally amenable for such purposes. 
A future direction of great interest is to study the noiseless limit due to vanishing stochastic excitation in \eqref{eq:diffusion} while meeting spatio-temporal marginal constraints in line with an analogous optimal mass transport theory.

\bibliographystyle{ieeetr}
\bibliography{main.bib}

\end{document}